\documentclass[12pt]{amsart}
\usepackage{amsfonts}
\usepackage{amssymb}
\usepackage{amsthm}
\usepackage{dsfont}
\usepackage{color}
\usepackage{url}
\usepackage{enumerate}

\newtheorem{theorem}{Theorem}[section]
\newtheorem{lemma}[theorem]{Lemma}

\newtheorem{corollary}[theorem]{Corollary}

\newtheorem{fact}[theorem]{Fact}

\newtheorem{problem}[theorem]{Problem}

\newtheorem{remark}[theorem]{Remark}

\newcommand{\I}{\mathcal I}

\newcommand{\N}{\mathbb N}
\newcommand{\Q}{\mathbb Q}
\newcommand{\R}{\mathbb R}

\newcommand{\ve}{\varepsilon}

\newcommand{\mc}{\mathcal}
\newcommand{\sbq}{\subseteq}
\newcommand{\wf}{\widetilde{f}}
\subjclass[2010]{39B55, 39B72, 39B52}
\keywords{generalized polynomials, alternative equation, conditional inequality, Mendez ideals}

\author{Marek Balcerzak}
\address{Institute of Mathematics,
         Lodz University of Technology, al. Politechniki 8,
         93-590 \L\'od\'z,
          Poland}
         \email{marek.balcerzak@p.lodz.pl}
         
\author{Micha{\l} Pop{\l}awski}
\address{Institute of Mathematics,
         Lodz University of Technology, al. Politechniki 8,
         93-590 \L\'od\'z,
          Poland}
         \email{michal.poplawski@p.lodz.pl}
         
 \title{Extending theorems of Boros and Menzer}        
\begin{document}
\begin{abstract}
We extend results of Boros and Menzer on the alternative equation $f(x)f(y)=0$ for generalized polynomials $f$, and their theorems on the conditional inequality $f(x)f(y)\ge 0$ for generalized monomials $f$ of even degree. We use similar methods and ideas. We replace the largeness, of the respective Borel plane set $D$, in the measure or in the Baire category sense, by its largeness in the mixed measure-category sense.
\end{abstract}
\maketitle
\section{Introduction}
In the recent papers \cite{BM1} and \cite{BM}, Boros and Menzer considered the alternative function equation
$f(x)f(y)=0$ for generalized polynomials $f$, and the conditional function inequality $f(x)f(y)\ge 0$ for generalized monomials of even degree $m\ge 2$, under measure or Baire category constraints on $f$. 
In fact, their theorems for alternative equations are formulated in a general form where $f\colon G\to \mathbb C$ and $G$ is a locally compact Abelian group generated by a neighbourhood of zero, with a $\sigma$-finite Haar measure, and in the category case, $f\colon\R^N\to\mathbb C$. For the conditional inequality, Boros and Menzer assumed simply that $f\colon\R\to\mathbb R$. Our main purpose is to extend results of \cite{BM} to the setting where one assumes mixed measure-category constraints on $f$.

Let us recall from \cite{BM} some basic facts concerning generalized monomials and polynomials. See also \cite{Ku}, \cite{Sz}.
Let $G$ be an Abelian topological group with the operation $+$. Given $f\colon G\to\mathbb C$ and $h,x\in G$, we define $\Delta_h f(x):=f(x+h)-f(x)$. If $m\in\N$ and $F\colon G^m\to\mathbb C$ is a non-zero symmetric $m$-additive function (that is, a function addtitive in each variable), consider its diagonalization
\begin{equation*} \label{EE0}
f(x):=F(\overbrace{x,x,\dots,x}^m)\quad (x\in G).
\end{equation*} 
Then $f$ is called a \emph{generalized monomial} of degree $m$. Additionally, we may call constant functions as generalized monomials of degree zero. Note that $f(rx)=r^mf(x)$ for all $r\in\mathbb Q$ and $x\in G$ when $f$ is of positive degree $m$.
If $F$ and $f$ are as above, we have
\begin{equation*} 
\Delta_{h_1}\Delta_{h_2}\dots\Delta_{h_m}f(x)=m!\, F(h_1,h_2,\dots,h_m)
\end{equation*}
for all $x,h_1,h_2,\dots ,h_m\in G$.
Moreover, it follows that
\begin{equation*} 
\Delta_{h_1}\Delta_{h_2}\dots\Delta_{h_n}f(x)=0
\end{equation*}
for all $x,h_1,h_2,\dots ,h_n\in G$ if $n$ is an integer such that $n>m$.
Assuming that $G$ is uniquely divisible by $(m+1)!$, a \emph{generalized monomial}
 $f\colon G\to\mathbb C$ of degree $m$ can be characterized as a solution of
the functional equation
\begin{equation} \label{EE1}
\Delta_y^m f(x)=m!\, f(y)\quad (x,y\in G)
\end{equation}
where $\Delta_y^m$ denotes the $m$-th iterate of the operation $\Delta_y$.

A finite sum of generalized monomials is called a \emph{generalized polynomial}. More exactly,
a generalized polynomial of degree at most $p\in\N$ is a function $f\colon G\to \mathbb C$ of the
form
\begin{equation} \label{EE2}
f(x)=\sum_{k=0}^p f_k(x)\quad (x\in G)
\end{equation}
where functions $f_k$ (for $k=1,\dots, p$) are associated with symmetric $k$-additive functions $F_k$ as follows
$$f_k(x):=F_k(\overbrace{x,x,\dots,x}^k)\quad (x\in G)$$
and $f_0(x):=F_0\in\mathbb C$ for all $x\in G$.
In fact, generalized polynomials of degree at most $p$ can be characterized as solutions of the following functional equation, provided that $G$ is uniquely divisible by $(p+1)!$:
\begin{equation} \label{EE3}
\Delta^{p+1}_y f(x)=0\quad (x,y\in G).
\end{equation}

Through the paper, we will assume that $G=\R$ (or $G=\R^N$). It is known that, by the Axiom of Choice, there exist generalized discontinuous monomials (polynomials) $f\colon\R\to\mathbb C$. Note that a more restrictive notion of a generalized polynomial $f\colon G\to\mathbb C$ was considered in the recent paper \cite{KL}.

Let us state two pairs of results from \cite{BM} which have a form of an implication. They will be extended in this paper to a new situation in the sense that the conclusions will be the same, but our new assumptions will present completely different cases that cannot be derived from the old ones.
However, an idea of proofs remains the same.

The first theorem is quoted for the particular case where the considered group is $\R$.

\begin{theorem}(See \cite[Theorems 3.2 and 3.6]{BM}) \label{alt}
Let $f\colon\R\to\mathbb C$ be a generalized polynomial such that $f(x)f(y)=0$ for all $(x,y)\in D$ where $D\sbq\R^2$. Assume that one of the conditions is true:
\begin{itemize}
\item[(I)] $D$ is a Lebesgue measurable set of positive measure,
\item[(II)] $D$ is a Baire measurable nonmeager set.
\end{itemize}
Then $f(x)=0$ for each $x\in\R$.
\end{theorem}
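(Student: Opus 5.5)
Since $D$ is large (positive measure, or Baire measurable and nonmeager), the plan is to reduce it to a rectangle-type statement via Fubini (case (I)) or the Kuratowski--Ulam theorem (case (II)). In case (I), Fubini gives a set $A\sbq\R$ of positive outer measure (in fact measurable of positive measure) such that each section $D_x=\{y:(x,y)\in D\}$, $x\in A$, has positive measure. In case (II), Kuratowski--Ulam gives a nonmeager Baire measurable set $A$ such that each $D_x$, $x\in A$, is nonmeager with the Baire property. Suppose $f\not\equiv 0$. The goal is to show that $Z=\{x: f(x)=0\}$ is small, namely null in case (I) or meager in case (II), under the assumption that $Z$ contains a large set. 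That would give a contradiction: for $x\in A\setminus Z$ we get $f(x)\neq 0$, hence $f(y)=0$ for all $y\in D_x$, so $D_x\sbq Z$ with $D_x$ large. If $A\sbq Z$ instead, then $Z$ again contains a large set.

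The core step is therefore a lemma: if a generalized polynomial $f$ of degree at most $p$ vanishes on a set $E$ of positive Lebesgue measure, or on a nonmeager Baire measurable set $E$, then $f\equiv 0$. I would prove it by induction on $p$ using the difference operator. For $p=0$ it is trivial. For the inductive step I would use Steinhaus's theorem, or Piccard's theorem in the category case. Let $E_1\sbq E$ be a measurable (resp. Baire) large piece. For small $h$, the set $E_1\cap(E_1-h)$ is still large, since $h\mapsto\lambda(E_1\cap(E_1-h))$ is continuous and positive at $0$ (resp. $E_1$ is comeager in some interval). On $E_1\cap(E_1-h)$ the function $\Delta_h f$ vanishes, and $\Delta_h f$ is a generalized polynomial of degree at most $p-1$. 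By induction, $\Delta_h f\equiv 0$ for all $h$ in a neighbourhood $U$ of $0$. Since $U$ generates $\R$ as a group and $\Delta_{h+k}f(x)=\Delta_h f(x+k)+\Delta_k f(x)$, it follows that $\Delta_h f\equiv0$ for every $h\in\R$. Hence $f$ is constant, and since it vanishes somewhere, $f\equiv 0$.

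Two details need care. First, I must make sure the sets produced by Fubini and Kuratowski--Ulam are measurable (resp. Baire measurable), so that Steinhaus/Piccard applies. Fubini gives measurability of the section function $x\mapsto\lambda(D_x)$. For category, the set of $x$ with $D_x$ nonmeager is Baire measurable for Baire measurable $D$, which is a standard part of Kuratowski--Ulam. Second, the dichotomy must be handled cleanly. Either $A\setminus Z$ is nonempty, and we are done via some section $D_x$, or $A\sbq Z$, and we are done via $A$ itself. Note that $Z$ need not be measurable, but this is harmless, since the lemma is applied to the measurable large subsets $D_x$ or $A$ of $Z$.

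I expect the main obstacle to be the inductive lemma, specifically ensuring that the sets $E_1\cap(E_1-h)$ remain large for all $h$ in a whole neighbourhood, so that the induction applies uniformly. The passage from local vanishing of $\Delta_h f$ to global vanishing then uses the cocycle identity above, which is routine.
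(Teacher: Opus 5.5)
Your argument is correct. Its outer structure matches what the paper indicates for this theorem (the proof of Theorem \ref{twu} and the concluding remarks). You take the set of $x$ whose sections $D_x$ are large and split into two cases: either some such $x$ has $f(x)\neq 0$, so $D_x\sbq Z$, or all such $x$ lie in $Z$. Then you apply Theorem \ref{TT1}.

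The paper first shrinks $D$ to a Borel set (a closed set of positive measure, resp.\ the $G_\delta$ part of $D$). Then all sections and the sets $\{x\colon D_x\notin\mc N\}$, $\{x\colon D_x\notin\mc M\}$ are Borel. You keep $D$ merely measurable and get measurability from the completed Fubini theorem and from Kuratowski--Ulam applied to $D=U\bigtriangleup T$. This is equally valid, with a bit more bookkeeping.

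The genuine difference is in the zero-set lemma. The paper proves it in one step. By Fact \ref{fa} and Corollary \ref{cor}, for every small $h$ the zero set contains a progression $x,x+h,\dots,x+ph$. Hence $p!\,f_p(h)=\Delta_h^p f(x)=0$ near $0$, and rational homogeneity of the top monomial $f_p$ forces $f_p\equiv 0$.

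You instead induct on the degree. This needs only three things:
\begin{itemize}
\item the two-point Steinhaus/Piccard property, that $E_1\cap(E_1-h)$ stays large for small $h$;
\item the observation that $\Delta_h f$ is a generalized polynomial of degree at most $p-1$, obtained by expanding $F_k(x+h,\dots,x+h)$ by multiadditivity;
\item the cocycle identity, to pass from $h$ near $0$ to all $h\in\R$.
\end{itemize}
Your route avoids the $p$-fold translate estimate of Fact \ref{fa}, the identity $\Delta_h^p f=p!\,f_p(h)$ and homogeneity. It also treats measure and category uniformly. The cost is that the inductive hypothesis is applied to the new sets $E_1\cap(E_1-h)$. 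So you must check, as you do, that these are again measurable of positive measure (resp.\ Baire measurable and nonmeager), not merely non-null or nonmeager.
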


Observe that the set $D$ in (I) and (II) can be assumed to be Borel. Indeed, it follows from the fact that a Lebesgue measurable set of positive measure clearly contains a closed set of positive measure. In the Baire category case, we note that a Baire measurable set is the union of a meager set and a $G_\delta$ set (cf. \cite[Theorem 4.4]{Ox}). So, if $f$ is a generalized polynomial and the function $(x,y)\mapsto f(x)f(y)$ vanishes on a Borel set $D\sbq\R^2$ that is large in the measure or the Baire category sense, then $f$ vanishes everywhere.

The next theorem is formulated in a similar manner.

\begin{theorem}(See \cite[Theorems 4.3 and 4.6]{BM}) \label{cond}
Let $f\colon\R\to\R$ be a generalized monomial of even degree $m\ge 2$ fulfilling $f(x)f(y)\ge0$ for all 
$(x,y)\in D$ where $D\sbq\R^2$. Assume that one of the conditions is true:
\begin{itemize}
\item[(I)] $D$ is a Lebesgue measurable set of positive measure,
\item[(II)] $D$ is a Baire measurable nonmeager set.
\end{itemize}
Then $f(x)f(y)\ge 0$ for each $(x,y)\in\R^2$.
\end{theorem}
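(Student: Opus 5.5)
The plan is to reduce the two-variable condition to a one-variable statement about the sign of $f$ on a large set. Throughout I treat (I) and (II) in parallel, using Fubini's theorem in case (I) and the Kuratowski--Ulam theorem in case (II). As observed after Theorem \ref{alt}, we may assume $D$ is Borel.

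\emph{Step 1 (reduction to one variable).} By Fubini (respectively Kuratowski--Ulam), the set $A$ of those $x$ whose vertical section $D_x=\{y:(x,y)\in D\}$ has positive measure (respectively is nonmeager) is itself measurable of positive measure (respectively Baire measurable and nonmeager). Split $A$ into $A_0=A\cap\{f=0\}$, $A_+=A\cap\{f>0\}$ and $A_-=A\cap\{f<0\}$.

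\emph{Step 2 (the degenerate case).} Suppose $A_+\cup A_-$ is small (null, respectively meager). Then $A_0$ is large, and $f(x)f(y)=0$ on $A_0\times\R$, which is large in $\R^2$. Theorem \ref{alt} then gives $f\equiv 0$, and the conclusion holds trivially. Otherwise some $x_0\in A_+\cup A_-$ exists; replacing $f$ by $-f$ if necessary (the hypothesis is invariant under this), we may take $f(x_0)>0$. Then $f(y)\ge 0$ for every $y\in E:=D_{x_0}$, and $E$ is a Borel set of positive measure (respectively nonmeager). So it suffices to prove the following one-variable claim.

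\emph{Key claim.} If a generalized monomial $f$ of even degree $m\ge2$ satisfies $f\ge0$ on a large Borel set $E$, then $f\ge0$ on all of $\R$. Granting this, $f$ has constant sign everywhere, so $f(x)f(y)\ge0$ on $\R^2$.

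\emph{Proving the key claim (the main obstacle).} Suppose, towards a contradiction, that $f(z)<0$ for some $z$. Two facts are available:
\begin{itemize}
\item[(a)] For every $w$, the map $q\mapsto f(w+qz)$, $q\in\Q$, is a real polynomial of degree at most $m$ whose $q^m$-coefficient is $f(z)<0$.
\item[(b)] The map $q\mapsto f(z+qw)$, $q\in\Q$, is a polynomial with constant term $f(z)<0$, so it stays negative for all small rational $q$.
\end{itemize}
Fact (a) shows that for each $w\in E$, the points $w+qz$ leave $E$ for all large rational $|q|$. To turn this into a contradiction with the largeness of $E$, I would first try to prove that $f$ is bounded below on a large set, hence continuous. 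This is in the spirit of the Ostrowski/Kuczma-type theorems, which say that an additive function, or a generalized polynomial, bounded on one side on a set of positive measure or a nonmeager Baire set is continuous. Concretely, I would apply Steinhaus/Piccard to get an interval $I\subseteq E-E$. I would then combine the identity $\Delta^m_h f(x)=m!\,f(h)$ with the nonnegativity of $f$ on translates of $E$ to control $f$ from one side on $I$. If $f$ turns out to be continuous, then $f(x)=f(1)x^m$ with $m$ even, so $f(1)\ge0$ because $f\ge0$ somewhere on $E\setminus\{0\}$, and hence $f\ge0$ everywhere. This contradicts $f(z)<0$.

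If a direct one-sided boundedness argument does not go through, the fallback is to use fact (b) to enlarge the negativity set. Homogeneity shows that $\{f<0\}$ contains $z+\Q w$ near $z$ for every direction $w$, and also $\Q^\ast$-multiples of such points. From this I would try to derive that $\{f<0\}$ meets every translate of $E$ in a large set, which contradicts $E\subseteq\{f\ge0\}$. I expect this step, getting regularity or sign information from one-sided information on a large set, to be where the real work lies.
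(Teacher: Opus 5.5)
Your reduction (Steps 1--2) is correct. It is essentially the case split used in the paper's proof of Theorem \ref{tww}, which is modeled on Boros--Menzer: either some large section $D_x$ lies in $\{f\ge 0\}$ or in $\{f\le 0\}$, or $f$ vanishes on the large set of such $x$.

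\textbf{The gap.} The key claim is the entire content of the theorem, and you do not prove it. Your main route rests on a false principle. A generalized polynomial that is bounded on one side on a set of positive measure (or on a nonmeager Baire set) need \emph{not} be continuous. Take a discontinuous additive $a$. Then $f=a^2$ is a generalized monomial of degree $2$ with $f\ge 0$ on all of $\R$, yet it is discontinuous. In fact the hypothesis of your key claim already gives a one-sided bound on a large set, so that principle would make every such $f$ continuous.

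Consequently, any continuity argument would have to use $f(z)<0$ in an essential way, and your sketch does not. The proposed mechanism also fails. The identity $\Delta_h^m f(x)=m!\,f(h)$ is an alternating sum, so the lower bounds $f(x+kh)\ge 0$ for $x+kh\in E$ give no one-sided bound on $f(h)$.

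The fallback is only a hope. Showing that $\{f<0\}$ is ``big'' near $z$ yields no contradiction with $E\sbq\{f\ge 0\}$ by itself. A non-measurable set and its complement can both have full outer measure, or both be nonmeager in every interval.

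\textbf{The missing idea.} What is missing is a zero--one law for the cone $P_f=\{f\ge 0\}$.
\begin{enumerate}
\item[(1)] Since $m$ is even, $f(rx)=r^mf(x)$ gives $rP_f\sbq P_f$ for all $r\in\Q$. Hence $P_f\supseteq S:=\bigcup_{r\in\Q\setminus\{0\}} rE$.
\item[(2)] $S$ is a symmetric Borel set invariant under multiplication by nonzero rationals. Under $x\mapsto \log|x|$ it becomes invariant under translations by the dense subgroup $\{\log r\colon r\in\Q,\ r>0\}$. Being non-null (respectively nonmeager), it is therefore conull (respectively comeager). This is Lemma \ref{LL1}.
\item[(3)] Your fact (b) then finishes the proof; this is Lemma \ref{LL2} with $H=[0,\infty)$. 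Fix $z$. The set of $w$ such that $z+qw\notin P_f$ for some $q\in\Q\setminus\{0\}$ is contained in $\bigcup_{q} q^{-1}\bigl((\R\setminus S)-z\bigr)$. This is a countable union of null (respectively meager) sets, so you can choose $w$ outside it.
\item[(4)] For that $w$, $f(z+qw)\ge 0$ for every nonzero rational $q$. Letting $q\to 0$ in the polynomial $q\mapsto f(z+qw)$ gives $f(z)\ge 0$.
\end{enumerate}
Combined with your reduction, these two lemmas give the theorem. This is exactly the route the paper follows in the proof of Theorem \ref{tww}.
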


As before, the set $D$ in (I) and (II) can be assumed to be Borel. So, if $f$ is a generalized monomial of even degree $m\ge 0$ and the function $\wf (x,y):=f(x)f(y)$ is non-negative on a Borel set $D\sbq\R^2$ that is large in the measure or the Baire category sense, then $\wf$ is non-negative everywhere.

We are going to obtain the analogues of Theorems \ref{alt} and \ref{cond} where a Borel set $D\sbq\R^2$ is large in the mixed measure-category sense.

\section{The Fubini products of $\sigma$-ideals}
The mixed measure-category largeness that we use in this paper is connected with the Mendez 
$\sigma$-ideals $\mc M\otimes\mc N$ and
$\mc N\otimes\mc M$ on the plane $\R^2$. See \cite{M1}, \cite{M2} and \cite{BG}. Here $\mc M$ and $\mc N$ stand for the $\sigma$-ideals of meager sets and Lebesgue null sets in $\R$. 

First, let us recall the notion of the Fubini product of two $\sigma$-ideals. Let $\mc I$ and $\mc J$ be given $\sigma$-ideals of subsets of $\R$. For $B\subseteq\R^2$ and $x\in\R$, we let $B_x:=\{y\in\R\colon (x,y)\in B\}$.
Then we put 
$$\Phi_{\mc J}(B):=\{x\in\R\colon B_x\notin\mc J\} .$$ 
Define $\mc I\otimes \mc J$ as the family of all sets $A\subseteq\R^2$ such that there exists a Borel set $B\subseteq\R^2$ with $\Phi_{\mc J}(B)\in\mc I$. It is easy to check that $\mc I\otimes \mc J$ is a $\sigma$-ideal. By the Fubini theorem, the Kuratowski-Ulam theorem and their converses (see \cite{Ox}) it follows that $\mc N\otimes\mc N$ and $\mc M\otimes\mc M$ are, respectively, the $\sigma$-ideals of Lebesgue null sets and meager sets in the plane. If we do not use a Borel cover $B$ in the definition of $\mc I\otimes\mc J$, some pathological sets can appear in $\mc N\otimes\mc N$ and $\mc M\otimes\mc M$ (cf. \cite[Theorems 14.4, 15.5]{Ox}).

Consider the well-known decomposition $\R=A\cup B$ of the line into disjoint Borel sets $A\in\mc M$ and $B\in\mc N$ (see \cite{Ox}). Mendez in \cite{M1} observed that there are the following two useful decompositions of the plane
\begin{itemize}
\item $(A\times\R)\cup (B\times \R)=\R^2\;\mbox{ with }\; A\times\R\in\mc M\otimes\mc N\;\mbox{ and }\; B\times\R\in\mc N\otimes\mc M;$
\item $C\cup H=\R^2$ where 
$C:=(A\times B)\cup(B\times A),\;H:=(A\times A)\cup(B\times B)$,\\ 
and
$C\in(\mc M\otimes\mc M)\cap(\mc N\otimes\mc N),\; H\in (\mc M\otimes\mc N)
\cap(\mc N\otimes\mc M).$
\end{itemize}
From the former decomposition it follows that there is no inclusion between $\mc M\otimes\mc N$
and $\mc N\otimes\mc M$.
From the latter decomposition it follows that there is no inclusion between any pair of $\sigma$-ideals:
\begin{itemize} 
\item $\mc M\otimes\mc M$ and $\mc M\otimes \mc N$;
\item $\mc N\otimes\mc N$ and $\mc M\otimes \mc N$;
\item $\mc M\otimes\mc M$ and $\mc N\otimes \mc M$;
\item $\mc N\otimes\mc N$ and $\mc N\otimes \mc M$.
\end{itemize}

In our results, large sets are Borel sets that do not belong to $\mc M\otimes\mc N$ or $\mc N\otimes\mc M$.
The lack of the respective inclusions between $\sigma$-ideals shows that this sort of largeness cannot be deduced from the largeness in the sense of measure or the Baire category in $\R^2$.

\section{A theorem on the alernative equation}
In this section we consider an alternative equation
$$f(x)f(y)=0,$$
assumimng that $f\colon\R\to\mathbb C$ is a generalized polynomial.

The following facts on zeros of generalized polynomials are stated in \cite[Theorems 3.1 and 3.4]{BM} in a more general form.

\begin{theorem} \label{TT1}
Let $f\colon\R\to\mathbb C$ be a generalized polynomial.
\begin{itemize}
\item[(i)] If $f$ vanishes on a measurable set $E\subseteq\R$ of positive measure, then $f$ vanishes everywhere on $\R$.
\item[(ii)] If $f$ vanishes on a Baire nonmeager set $E\subseteq\R$, then $f$ vanishes everywhere on $\R$.
\end{itemize}
\end{theorem}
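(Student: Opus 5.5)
We prove (i) and (ii) by a single Steinhaus-type argument combined with the finite-difference characterization \eqref{EE3}, inducting on the degree $p$ of $f$. Write $f=\sum_{k=0}^p f_k$ as in \eqref{EE2}.

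\emph{Step 1: the difference set contains an interval.} By the Steinhaus theorem in case (i), and the Piccard--Pettis theorem in case (ii), there is $\delta>0$ such that for every $h\in(-\delta,\delta)$ the set $E\cap(E-h)$ is again large: of positive measure in (i), nonmeager Baire measurable in (ii). More generally, we need that for finitely many $h_1,\dots,h_n$ in a small enough neighbourhood of $0$, the intersection $\bigcap_{\epsilon}\bigl(E-\sum_i \epsilon_i h_i\bigr)$ over $\epsilon\in\{0,1\}^n$ is still large. We obtain this by iterating the previous statement, replacing $E$ at each stage by the large set $E\cap(E-h_1)$. If $x$ lies in this intersection, every point $x+\sum_i\epsilon_ih_i$ lies in $E$. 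Since $\Delta_{h_1}\cdots\Delta_{h_n}f(x)$ is a signed sum of these values, it vanishes.

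\emph{Step 2: the top term vanishes near zero.} Take $n=p$. Then $\Delta_{h_1}\cdots\Delta_{h_p}f(x)=p!\,F_p(h_1,\dots,h_p)$, because the lower-degree terms are annihilated by $p$ differences. Step 1 therefore gives $F_p(h_1,\dots,h_p)=0$ for all $h_i\in(-\delta',\delta')$ with suitable $\delta'>0$. Now fix arbitrary $h_i\in\R$ and choose $N\in\N$ with $h_i/N\in(-\delta',\delta')$ for every $i$. By $p$-additivity,
\[
F_p(h_1,\dots,h_p)=N^p F_p(h_1/N,\dots,h_p/N)=0 .
\]
Hence $F_p\equiv0$ and $f_p\equiv0$.

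\emph{Step 3: induction.} After Step 2, $f=\sum_{k<p}f_k$ is a generalized polynomial of degree at most $p-1$ that still vanishes on $E$. The induction hypothesis gives $f_k\equiv0$ for all $k$. The base case $p=0$ is trivial: $f$ is a constant vanishing on the nonempty set $E$.

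The main obstacle is the iterated Steinhaus property in Step 1. In the category case we use that a nonmeager Baire set $E$ is comeager in some interval $I$. For small $h$, the set $E\cap(E-h)$ is then comeager in a slightly smaller interval. Repeating this $2^n$-fold, with $n$ bounded by $p$, keeps the set comeager in an interval as long as all the $h_i$ are small. In the measure case we use a density point $x_0$ of $E$. For all sufficiently small shifts $s$, each set $E-s$ has density close to $1$ in a fixed small interval around $x_0$, so the intersection of $2^n$ such shifts has positive measure there.
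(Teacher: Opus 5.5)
Your proof is correct and follows essentially the same route as the paper. A Steinhaus-type lemma gives, uniformly for all sufficiently small increments, a configuration of points of $E$; the $p$-th difference at that configuration isolates the top-degree term, which therefore vanishes near $0$ and hence everywhere by homogeneity. The differences are only in the details: you use the cube $\{x+\sum_i\epsilon_i h_i\}$ and the multiadditive $F_p$ where the paper uses the progression $x,x+h,\dots,x+ph$ and $\Delta_h^p f(x)=p!\,f_p(h)$; you prove the lemma via density points (and via comeagerness in an interval for (ii), which the paper only cites from Boros and Menzer) rather than Kurepa's approximation argument; and you induct on the degree instead of arguing by contradiction with the top nonzero term.
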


Statement (i) of the above theorem was proved by Sz\'ekelyhidi \cite[Theorem 3.3, p. 33]{Sz} in a general version for a Haar measure on a locally compact Abelian group. Statement (ii) was proved in \cite{BM} for the space $\R^N$. 
Below, for the reader's convenience, we reconstruct a full simple proof of (i) which is analogous to the argument for (ii) presented in \cite{BM}.

Let $\lambda$ stand for Lebesgue measure on $\R$. The following fact and corollary are known (in \cite{Sz} this is referred to a result of Kurepa \cite{Kur}). It is connected with the classical theorem of Steinhaus.

\begin{fact} \label{fa}
Let $A\sbq\R$ be a bounded measurable set of positive measure, and let $n$ be a positive integer.
Then
$$\lim_{h\to 0}\lambda\left(A\cap\bigcap_{k=1}^n(A+kh)\right)=\lambda(A).$$
\end{fact}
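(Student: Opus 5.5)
The plan is to reduce the statement to continuity of translation in $L^1$. Since $A$ is bounded and measurable, its indicator $\chi_A$ belongs to $L^1(\R)$. Lebesgue measure is translation invariant, so $\lambda(A\cap (A+t))=\int \chi_A(x)\chi_A(x-t)\,dx$. The complement estimate is
$$\lambda(A)-\lambda\bigl(A\cap(A+t)\bigr)=\lambda\bigl(A\setminus(A+t)\bigr)\le \int|\chi_A(x)-\chi_A(x-t)|\,dx=\|\chi_A-\chi_A(\cdot-t)\|_1 .$$
The key analytic input is the classical fact that $t\mapsto \chi_A(\cdot-t)$ is continuous from $\R$ into $L^1$. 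Hence $\lambda(A\setminus(A+t))\to 0$ as $t\to 0$.

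Next I pass to the $n$-fold intersection using subadditivity:
$$A\setminus\Bigl(A\cap\bigcap_{k=1}^n(A+kh)\Bigr)=\bigcup_{k=1}^n \bigl(A\setminus(A+kh)\bigr).$$
This gives
$$0\le \lambda(A)-\lambda\Bigl(A\cap\bigcap_{k=1}^n(A+kh)\Bigr)\le \sum_{k=1}^n \lambda\bigl(A\setminus(A+kh)\bigr).$$
Here $\lambda(A)<\infty$ by boundedness, so the subtraction is legitimate. As $h\to0$, each $kh\to 0$ because $n$ is fixed, so each of the finitely many terms tends to $0$ by the first step. This yields the limit $\lambda(A)$.

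The only nontrivial step is the $L^1$-continuity of translation. To keep the argument self-contained, I would prove it by outer regularity. Given $\ve>0$, I choose an open set $U\supseteq A$ with $\lambda(U\setminus A)<\ve$. Since $A$ is bounded, $U$ can be taken bounded, and I approximate $U$ within $\ve$ by a finite union $V$ of disjoint intervals. Then $\|\chi_A-\chi_V\|_1<2\ve$. For a single interval $I$, $\lambda(I\triangle(I+t))\le 2|t|$, so $\|\chi_V-\chi_V(\cdot-t)\|_1\le 2N|t|$, where $N$ is the number of intervals. A triangle inequality then gives $\|\chi_A-\chi_A(\cdot-t)\|_1<4\ve+2N|t|$, which is less than $5\ve$ for $|t|$ small. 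The positivity of $\lambda(A)$ is not needed for the limit itself; it only matters in later applications, where it makes the intersection nonempty of positive measure for small $h$.
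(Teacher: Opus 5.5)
Your proof is correct. Its first half matches the paper's: both pass to the complement and use the union bound $\lambda\left(A\setminus\bigcap_{k=1}^n(A+kh)\right)\le\sum_{k=1}^n\lambda(A\setminus(A+kh))$. The two arguments differ only in how they get the single-shift estimate $\lambda(A\setminus(A+t))\to 0$.

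The paper sandwiches $F\sbq A\sbq U$, with $F$ closed (hence compact, since $A$ is bounded), $U$ bounded and open, and $\lambda(U\setminus F)<\ve/n$. Compactness of $F$, used implicitly, gives $\delta>0$ with $F+t\sbq U$ whenever $|t|<n\delta$. Then $\lambda(A\setminus(A+kh))\le\lambda(U\setminus(F+kh))=\lambda(U)-\lambda(F)<\ve/n$ follows in one line.

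You instead treat the estimate as $L^1$-continuity of translation. You prove that by approximating $A$ with a finite union of intervals and using $\lambda(I\triangle(I+t))\le 2|t|$. Your version needs only outer regularity and gives an explicit modulus $2N|t|$ for the approximant. The lemma you isolate is a standard fact valid for every $L^1$ function, so it could simply be cited.

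On the other hand, your proof of that lemma relies on open subsets of $\R$ being unions of intervals (boxes would be needed in $\R^N$). The paper's compact-inside-open argument uses only regularity and a neighbourhood $W$ of zero with $F+W\sbq U$. It therefore transfers essentially unchanged to $\R^N$, which the paper mentions right after the corollary. It also transfers to Haar measure on locally compact groups, the setting of Sz\'ekelyhidi's result that the paper cites.
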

\begin{proof}
The assertion follows from the equivalent condition
$$\lim_{h\to 0}\lambda\left(A\setminus\bigcap_{k=1}^n(A+kh)\right)=0$$
which will be proved below.

Let $\ve>0$. Pick a bounded open set $U\supseteq A$ and a closed nonempty set $F\sbq A$ such that 
\begin{itemize}
\item there exists $\delta>0$ such that $F+nh\sbq U$ for all $h\in(-\delta,\delta)$;
\item $\lambda(U\setminus F)<\ve/n$.
\end{itemize}

Now, for each $h\in (-\delta,\delta)$ we have
\begin{align*}
\lambda\left(A\setminus\bigcap_{k=1}^n(A+kh)\right) & \le\sum_{k=1}^n\lambda\left(A\setminus(A+kh)\right)
\le\sum_{k=1}^n\lambda\left(U\setminus(F+kh)\right)\\
& =\sum_{k=1}^n\left(\lambda(U)-\lambda(F+kh)\right)<n(\ve/n)=\ve ,
\end{align*}
as desired.
\end{proof}

\begin{corollary} \label{cor}
Let $A\sbq\R$ be a bounded measurable set of positive measure, and let $n$ be a positive integer.
Then there is a neighbourhood $V$ of zero such that for every $h\in V$ there is $x\in A$ such that
$$x+kh\in A\quad\text{for }\quad k=1,2,\dots ,n.$$
\end{corollary}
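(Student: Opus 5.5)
The plan is to derive Corollary \ref{cor} directly from Fact \ref{fa}. By Fact \ref{fa}, applied to the given bounded measurable set $A$ with $\lambda(A)>0$ and the given $n$, we have
$$\lim_{h\to 0}\lambda\left(A\cap\bigcap_{k=1}^n(A+kh)\right)=\lambda(A)>0.$$
So we can choose $\delta>0$ such that for every $h\in V:=(-\delta,\delta)$,
$$\lambda\left(A\cap\bigcap_{k=1}^n(A+kh)\right)>\lambda(A)/2>0.$$
In particular, for each such $h$ the set $A\cap\bigcap_{k=1}^n(A+kh)$ is nonempty.

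Next, fix $h\in V$ and pick a point $z$ in this intersection. Then $z\in A$, and for each $k$ we have $z\in A+kh$, that is, $z-kh\in A$. This gives points of $A$ translated backwards, whereas the statement asks for $x\in A$ with $x+kh\in A$. There are two easy ways to fix the direction.

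The first is to note that $V$ is symmetric, so $-h\in V$ as well. Applying the above to $-h$ yields $z\in A$ with $z+kh\in A$ for $k=1,\dots,n$, and we take $x:=z$.

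The second is to apply Fact \ref{fa} directly to the sets $A-kh$. The first option is simpler and is the one I would use.

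The only step needing any care is this sign bookkeeping between $A+kh$ and the desired condition $x+kh\in A$. The symmetry of the interval $(-\delta,\delta)$ resolves it, so there is no genuine obstacle.
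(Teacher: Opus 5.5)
Your proposal is correct and follows the paper's proof: use Fact~\ref{fa} to get $V=(-\delta,\delta)$ on which $\lambda\bigl(A\cap\bigcap_{k=1}^n(A+kh)\bigr)>0$, then use the symmetry of $V$ to replace $h$ by $-h$. A point $x\in A\cap\bigcap_{k=1}^n(A-kh)$ then satisfies $x+kh\in A$ for $k=1,\dots,n$.
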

\begin{proof}
Since $\lambda(A)>0$ is finite, we can use Fact \ref{fa} and pick sufficiently small neighbourhood 
$V=(-\delta,\delta)$ of zero such that $\lambda\left(A\cap\bigcap_{k=1}^n(A+kh)\right)>0$ for all $h\in V$. 
Clearly, we may replace $h$ by $-h$ and pick $x\in A\cap\bigcap_{k=1}^n(A-kh)$ which yields the assertion.
\end{proof}
 
Note that a similar fact and a corollary are true in $\R^N$. In fact,
Corollary \ref{cor} is a measure counterpart of \cite[Lemma 3.3]{BM} which gave in \cite{BM} condition (ii) of Theorem \ref{TT1}. Below, we will use a similar argument to obtain condition (i) of this theorem.
\vspace{1 em}

\noindent
{\bf Proof of Theorem \ref{TT1}(i)}.
Suppose that $f$ is not identically zero. Consider its representation (\ref{EE2}) with $f_p$ that is not identically zero. By our assumption, $f$ vanishes on a measurable set $E$ of positive measure. We may assume that $E$ is bounded. For $A:=E$ and $n:=p$ pick a neighbourhood $V$ of zero as in Corollary \ref{cor}. 
Then for any $h\in V$ we can pick the respective point $x$. Using properties (\ref{EE1}), (\ref{EE3}) and \cite[Corollary 15.1.2]{Ku}, we have
$$f_p(h)=\frac{\Delta_h^p f_p(x)}{p\, !}=\frac{\Delta_h^pf(x)}{p\,!}=\frac{1}{p\,! }
\sum_{k=0}^p {p\choose k}(-1)^{p-k}f(x+kh)=0,$$
where the last equality follows from the property of $V$.
Hence  $f_p$ vanishes on $V$, and by the equality $f_p(ru)=r^p f_p(u)$ for all $r\in\mathbb Q$ and $u\in V$, we infer that $f_p$ vanishes everywhere. This yields a contradiction. \hfill $\Box$

\begin{remark} \label{Rem}
{\em Note that Theorem \ref{alt} is a strengthening of Theorem \ref{TT1}. Indeed, if $f$ vanishes on a set $E\subseteq\R$ of positive measure, then $f(x)f(y)=0$ for all $(x,y)\in E\times\R$ and $E\times\R$ is
of positive plane measure. So, $f$ vanishes everywhere by Theorem \ref{alt}. Similarly, we proceed in the case of the Baire category.}
\end{remark}

Now, we are ready to extend the results on the alternate equation presented in Theorem \ref{alt} to the new cases
where the largeness of $D$ means that it is a Borel set which does not belong to $\mc M\otimes\mc N$ or it does not belong to $\mc N\otimes\mc M$. Our proof is based on the same idea which was used in \cite[Theorem 3.2]{BM}.

\begin{theorem} \label{twu}
Let $f\colon\R\to\mathbb C$ be a generalized polynomial such that $f(x)f(y)=0$ for all $(x,y)\in D$ where $D\sbq\R^2$ is a Borel set such that either $D\notin \mc M\otimes \mc N$ or $D\notin\mc N\otimes \mc M$. Then $f(x)=0$ for each $x\in\R$.
\end{theorem}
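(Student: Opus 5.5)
Suppose, for a contradiction, that $f$ is not identically zero, and let $Z:=\{x\in\R\colon f(x)=0\}$. Since $f(x)f(y)=0$ on $D$, every point $(x,y)\in D$ satisfies $x\in Z$ or $y\in Z$. Hence for every $x\notin Z$ the section $D_x$ is contained in $Z$. The plan is to exploit the Fubini-type definition of $\mc I\otimes\mc J$: largeness of $D$ yields many $x$ with large sections $D_x$, and then Theorem \ref{TT1} forces $f\equiv 0$.

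Consider first the case $D\notin\mc M\otimes\mc N$. Since $D$ is itself Borel, we can take $B=D$ as the Borel cover in the definition. Then $D\notin\mc M\otimes\mc N$ gives $\Phi_{\mc N}(D)\notin\mc M$, so $\Phi_{\mc N}(D)=\{x\colon \lambda^*(D_x)>0\}$ is nonmeager; in fact $D_x$ is Borel, so it is measurable. There are now two possibilities.

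If some $x\in\Phi_{\mc N}(D)$ lies outside $Z$, then $D_x\sbq Z$. Since $D_x$ is a Borel set of positive measure, $f$ vanishes on a measurable set of positive measure, and Theorem \ref{TT1}(i) gives $f\equiv0$, a contradiction.

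Otherwise $\Phi_{\mc N}(D)\sbq Z$, so $f$ vanishes on a nonmeager set. The obstacle here is that Theorem \ref{TT1}(ii) requires a Baire measurable nonmeager set. The set $\Phi_{\mc N}(D)$ is the projection-type set $\{x\colon \lambda(D_x)>0\}$. For Borel $D$, the map $x\mapsto\lambda(D_x)$ is Borel measurable, by the standard measurability part of Fubini's theorem (a monotone class argument). So $\Phi_{\mc N}(D)$ is Borel, hence Baire measurable, and Theorem \ref{TT1}(ii) applies, again giving a contradiction.

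The case $D\notin\mc N\otimes\mc M$ is symmetric. Here $\Phi_{\mc M}(D)=\{x\colon D_x\notin\mc M\}$ has positive outer measure. If some such $x$ lies outside $Z$, then $D_x$ is a Borel nonmeager subset of $Z$, and Theorem \ref{TT1}(ii) applies. Otherwise $\Phi_{\mc M}(D)\sbq Z$, and we need measurability of $\Phi_{\mc M}(D)$. For Borel $D$, the set $\{x\colon D_x\text{ is nonmeager}\}$ is Borel; this is the classical Montgomery/Kuratowski–Ulam measurability result (see Kechris, Thm.~16.1). So it is Lebesgue measurable with positive measure, and Theorem \ref{TT1}(i) finishes the proof.

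The main step to justify carefully is the Borel-ness of $\Phi_{\mc N}(D)$ and $\Phi_{\mc M}(D)$ for Borel $D$. If one prefers to avoid quoting these results, the following alternative works. Intersect $\Phi$ with a Borel hull: the measurable kernel in the measure case, or the Baire-property kernel in the category case. Then note that $Z$ is a subset of $\R$ that contains a set which is large in the appropriate sense, and argue that this is enough.
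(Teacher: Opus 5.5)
Your proof is correct and is essentially the paper's argument: the Borel set $\{x\colon D_x\notin\mc N\}$ (resp.\ $\{x\colon D_x\notin\mc M\}$) either contains a point with $f(x)\neq 0$, so the Borel section $D_x$ is a large subset of the zero set, or lies entirely in the zero set, and Theorem \ref{TT1} finishes both cases (the paper cites Kechris for the Borel-ness that you justify via the measurability part of Fubini--Tonelli and Montgomery's theorem). You should drop the closing ``alternative'' via kernels, though: a measurable or Baire-property kernel of $\Phi$ may be negligible, and containing a set that is large only in the outer sense is not enough, because the $\Q$-span of a Hamel basis with one element removed is nonmeager and of positive outer measure yet is the zero set of a nonzero additive function, so the Borel-ness of $\Phi_{\mc N}(D)$ and $\Phi_{\mc M}(D)$ is genuinely needed.
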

\begin{proof}
Assume that $D\notin \mc M\otimes \mc N$. The proof for $D\notin\mc N\otimes \mc M$ is analogous.
Let $P:=\{ x\in\R\colon f(x)=0\}$. We will show that there exists a Borel set $A\sbq P$ such that either $A\notin\mc N$ or $A\notin\mc M$. Then by Theorem \ref{TT1} we obtain the assertion.

By the assumption, we clearly have
\begin{equation} \label{eq}
D\sbq (P\times\R)\cup(\R\times P).
\end{equation}
Since $D\notin \mc M\otimes \mc N$, we obtain
$$B:=\{ x\in\R\colon D_x\notin\mc N\}\notin\mc M.$$
We know that $B$ is Borel; see \cite[22.22]{Ke}. (If we replace $\mc M$ by $\mc N$ in the definition of $B$, then $B$ is again Borel; see \cite[22.25]{Ke}.) 

Consider two cases.

If $D_x\sbq P$ for some $x\in B$, then $A:=D_x\notin\mc N$ is the desired set.

If $D_x\setminus P\neq\emptyset$ for all $x\in B$, fix any $x\in B$. Then for each $y\in D_x\setminus P$ we have $(x,y)\in D\setminus (\R\times P)$, so from (\ref{eq}) it follows that $(x,y)\in P\times\R$. Hence $x\in B$ implies $x\in P$. So, we have proved that $B\sbq P$. Since $B\notin \mc M$, $A:=B$ is the desired set.
\end{proof}

Notice that the respective versions of Mendez ideals can be considered in $\R^k\times\R^k$ with the analogous properties. So, Theorem \ref{twu} can be generalized to the case when $f\colon\R^k\to\mathbb C$.

\section{A theorem on the conditional inequality}
Here we are going to extend the results of Boros and Menzer from Theorem \ref{cond} concerning the conditional inequality
\[ f(x)f(y)\ge 0\]
for a generalized monomial $f\colon\R\to\R$.
We need the following two lemmas.

\begin{lemma}(See \cite[Lemma 3.5]{BM1}, \cite[Lemma 4.4]{BM}.) \label{LL1}
Let $\I:=\mc N$ or $\I:=\mc M$. Assume that $P\sbq\R$ fulfills the conditions:
\begin{itemize}
\item[(i)] for all $r\in\Q$ and $x\in P$ we have $rx\in P$;
\item[(ii)] there is a Borel set $A\sbq P$ with $A\notin\I$.
\end{itemize}
Then $\R\setminus P\in\I$.
\end{lemma}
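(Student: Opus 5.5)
The plan is to use the standard ergodicity argument for the countable group of rational dilations, combined with the Steinhaus--Piccard phenomenon (measure case) or the Banach category analogue (category case). Set $Q:=\bigcup_{r\in\Q\setminus\{0\}} rA$. By (i), $Q\sbq P$. Each $rA$ is Borel, so $Q$ is Borel, and $Q$ is invariant under multiplication by nonzero rationals. It therefore suffices to show $\R\setminus Q\in\I$, since $\R\setminus P\sbq\R\setminus Q$.

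Suppose, towards a contradiction, that $C:=\R\setminus Q\notin\I$. The set $C$ is Borel and also invariant under multiplication by nonzero rationals. We now want to find $x\in Q$ and $y\in C$ with $y/x\in\Q$, which is impossible. It is easier to pass to an additive setting. Both $Q$ and $C$ are $\Q^*$-invariant, so either $Q\cap(0,\infty)\notin\I$ or $Q\cap(-\infty,0)\notin\I$; the latter reduces to the former via $x\mapsto -x$, since $-1\in\Q$. The same holds for $C$, and because both sets are invariant under $x\mapsto -x$, both $Q\cap(0,\infty)$ and $C\cap(0,\infty)$ lie outside $\I$. Apply $\log\colon(0,\infty)\to\R$. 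This is a homeomorphism that is locally bi-Lipschitz, so it preserves both $\mc N$ and $\mc M$ and maps Borel sets to Borel sets. The images $Q'=\log(Q\cap(0,\infty))$ and $C'=\log(C\cap(0,\infty))$ are disjoint Borel sets not in $\I$, each invariant under translation by the dense subgroup $G:=\{\log r\colon r\in\Q,\ r>0\}$.

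The key step is that a $G$-invariant Borel set $E\notin\I$ has complement in $\I$; this contradicts $C'\sbq\R\setminus Q'$ with $C'\notin\I$. In the measure case, take density points $a$ of $Q'$ and $b$ of $C'$. Choose $g\in G$ very close to $b-a$ and a small interval $J$ around $b$ in which both $C'$ and $Q'+g$ (translate of a small interval around $a$) occupy more than half the length. Then $Q'+g=Q'$ meets $C'$, a contradiction. In the category case, a Borel set not in $\mc M$ is comeager in some interval $U$ (Baire property), and $C'$ is comeager in some interval $V$. Pick $g\in G$ with $(U+g)\cap V$ a nonempty open set. On it both $Q'=Q'+g$ and $C'$ are comeager, so they intersect, again a contradiction.

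The main care point is this last step: making the density/comeager-in-an-interval argument precise and using the density of $G$. The reduction to positive reals and the use of $\log$ are routine. Alternatively one could avoid $\log$ and work directly with dilations, using density points and the fact that $x\mapsto rx$ with $r$ near $b/a$ carries a neighbourhood of $a$ onto a neighbourhood of $b$ while preserving the density-point and local-comeagerness properties. I would pursue whichever is shorter in the write-up.
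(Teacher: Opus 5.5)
Your proof is correct. Note that the paper itself gives no proof of this lemma; it only quotes it from the two Boros--Menzer papers. So there is no internal argument to compare against, and what you have written works as a self-contained proof.

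The structure is sound.
\begin{itemize}
\item Replacing $P$ by the Borel $\Q^*$-invariant hull $Q=\bigcup_{r\in\Q\setminus\{0\}}rA\sbq P$ sidesteps the fact that $P$ itself need be neither measurable nor Baire measurable. The conclusion then follows from $\R\setminus P\sbq\R\setminus Q$.
\item The $\log$ transfer turns dilation invariance into invariance under translations by $G=\{\log r\colon r\in\Q,\ r>0\}$.
\item What you prove in both cases is essentially the Steinhaus (resp.\ Piccard) theorem: $C'-Q'$ contains an interval, hence meets the dense group $G$. Any $g\in G\cap(C'-Q')$ gives a point of $C'\cap(Q'+g)=C'\cap Q'=\emptyset$.
\end{itemize}

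Two details should be tightened in the write-up.

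\textbf{Density of $G$.} Justify it briefly, e.g.\ $0<\log(1+1/n)\in G$ and $\log(1+1/n)\to 0$, so $G$ is a subgroup with arbitrarily small positive elements, hence dense.

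\textbf{Order of choices in the measure case.} As phrased (``choose $g$ \dots\ and a small interval $J$''), the order is ambiguous. First fix $\delta>0$ so that $Q'$ fills more than $3/4$ of $I=(a-\delta,a+\delta)$ and $C'$ fills more than $3/4$ of $J=(b-\delta,b+\delta)$. Only then choose $g\in G$ with $|g-(b-a)|<\delta$. This gives
\begin{itemize}
\item $\lambda((Q'+g)\cap J)\ge\lambda(Q'\cap I)-|a+g-b|>3\delta/2-|a+g-b|$,
\item $\lambda(C'\cap J)>3\delta/2$.
\end{itemize}
The sum exceeds $2\delta=\lambda(J)$, so the two sets meet. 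If instead $g$ is fixed first and $J$ is then shrunk around $b$, the density of $Q'+g$ is controlled only near $a+g\neq b$, and the estimate is lost.

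The category case is fine as written. It uses that Borel sets have the Baire property and that two sets comeager in a nonempty open set intersect.
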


\begin{lemma}(See \cite[Lemma 4.1]{BM1}, \cite[Lemma 4.5]{BM}.) \label{LL2}
Let $\I:=\mc N$ or $\I:=\mc M$. Assume that $f\colon \R\to\R$ is a generalized monomial of degree $m\ge 1$ 
and $H\sbq\R$ is a closed set such that $\{ x\in\R\colon f(x)\notin H\}\in\I$. Then $f(x)\in H$ for each $x\in\R$.
\end{lemma}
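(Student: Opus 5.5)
We prove Lemma \ref{LL2}. The plan is to argue by contradiction, using the rational homogeneity $f(rx)=r^m f(x)$ together with the additive structure coming from the symmetric $m$-additive function $F$ whose diagonalization is $f$. The key point is that if $f(x_0)\notin H$ at a single point, then the same failure can be propagated to a set that is large with respect to $\I$.

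Put $E:=\{x\in\R\colon f(x)\notin H\}$ and suppose $x_0\in E$. Since $H$ is closed, there is $\ve>0$ with $(f(x_0)-\ve,f(x_0)+\ve)\cap H=\emptyset$. For $t\in\R$, expand
$$f(x_0+t)=\sum_{k=0}^m {m\choose k}F(\overbrace{x_0,\dots,x_0}^{m-k},\overbrace{t,\dots,t}^{k}),$$
where each term $g_k(t):=F(x_0,\dots,x_0,t,\dots,t)$ is a generalized monomial of degree $k$ in $t$. Thus $f(x_0+t)-f(x_0)=\sum_{k=1}^m{m\choose k}g_k(t)$ is a generalized polynomial in $t$ with zero constant term.

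It then suffices to show that $U:=\{t\colon |f(x_0+t)-f(x_0)|<\ve\}$ is not in $\I$. If so, $x_0+U\sbq E$, and $x_0+U\notin\I$ by translation invariance, which contradicts $E\in\I$. Here is the order of steps.

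\begin{enumerate}
\item Since $E\in\I$, its complement is co-$\I$. In particular $\R\setminus E$ contains a Borel set that is not in $\I$, and $f$ maps that set into $H$.
\item Rational scaling is used to produce the smallness of $U$. For $r\in\Q$ we have $g_k(rt)=r^kg_k(t)$. So for fixed $t$, $f(x_0+rt)-f(x_0)=\sum_k{m\choose k}r^kg_k(t)$, which tends to $0$ as $r\to0$ through $\Q$. Hence $\bigcup_{n}\{t\colon rt\in U\text{ for all rational }r\text{ with }0<|r|<1/n\}=\R$. Call the $n$-th set $U_n$.
\item The $U_n$ increase to $\R$, and $\I$ is a $\sigma$-ideal, so some $U_n\notin\I$. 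Then any nonzero rational $r$ with $|r|<1/n$ gives $rU_n\sbq U$, and $rU_n\notin\I$ because both $\mc N$ and $\mc M$ are invariant under nonzero scalings. Hence $U\notin\I$, and the argument above finishes the proof.
\end{enumerate}

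The main obstacle is the measurability subtlety in step 3. The set $U_n$ need not be Borel, since $f$ is arbitrary. For $\I=\mc N$ or $\mc M$, "not in $\I$" for a non-measurable set only means positive outer measure (resp. not being contained in a meager set). That is enough for the countable-union argument, because $\I$ is a $\sigma$-ideal of arbitrary subsets, and the scaling invariance holds for these ideals without measurability. The contradiction with $E\in\I$ also needs only $x_0+rU_n\sbq E$. So I expect the argument goes through with no Borel hypothesis at all. This is consistent with the lemma not requiring any regularity of $f$.
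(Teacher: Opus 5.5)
Your proof is correct. Note that the paper does not prove this lemma itself; it only cites Boros and Menzer, so there is no in-paper argument to compare against.

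Your steps hold up:
\begin{itemize}
\item The binomial expansion and $g_k(rt)=r^kg_k(t)$ for $r\in\Q$ give $f(x_0+rt)\to f(x_0)$ as $r\to 0$ through $\Q$.
\item The sets $U_n$ increase to $\R$. Since $\I$ is a $\sigma$-ideal of arbitrary subsets with $\R\notin\I$, some $U_n\notin\I$, and no measurability of $U_n$ is needed, as you say.
\item For rational $0<|r|<1/n$ you have $rU_n\sbq U$. Invariance of $\mc N$ and $\mc M$ under nonzero dilations and translations then gives $x_0+U\notin\I$.
\item $x_0+U\sbq E$ because the $\ve$-interval around $f(x_0)$ misses $H$, which contradicts $E\in\I$.
\end{itemize}
Two cosmetic points: your step 1 is never used, and in step 2 you mean the \emph{largeness} of $U$, not its smallness.

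The same mechanism can be run directly, without contradiction. Since $E\in\I$, the countable union $\bigcup_{r\in\Q\setminus\{0\}} r^{-1}(E-x_0)$ lies in $\I$, so you can pick $t$ outside it. Then $x_0+rt\notin E$, i.e.\ $f(x_0+rt)\in H$, for every nonzero rational $r$. Letting $r\to 0$ through $\Q$ gives $f(x_0)\in H$, because $H$ is closed. This drops the $\ve$ and the auxiliary sets $U_n$, and with them the measurability discussion, since only affine images of $E$ appear.

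Both versions use only that $r\mapsto f(x_0+rt)$, for $r\in\Q$, is a polynomial with constant term $f(x_0)$. So they prove the statement for arbitrary real-valued generalized polynomials, and the hypothesis $m\ge 1$ plays no role.
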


Below, we will mimic ideas of the proof in \cite[Theorem 4.3]{BM}.

\begin{theorem} \label{tww}
Let $f\colon\R\to\R$ be a generalized monomial of an even degree $m$, fulfilling $f(x)f(y)\ge 0$
for all $(x,y)\in D$ where $D\sbq\R^2$ is a Borel set such that either 
$D\notin \mc M\otimes\mc N$ or $D\notin \mc N\otimes\mc M$. Then $f(x)f(y)\ge 0$ holds for all $(x,y)\in\R^2$.
\end{theorem}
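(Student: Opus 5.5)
Assume $D\notin\mc M\otimes\mc N$; the case $D\notin\mc N\otimes\mc M$ is symmetric, with the roles of $\mc M$ and $\mc N$ interchanged. If $m=0$, then $f$ is a constant $c$ and $f(x)f(y)=c^2\ge 0$ trivially, so we may take $m\ge 2$. Put
\[P_+:=\{x\in\R\colon f(x)\ge 0\},\qquad P_-:=\{x\in\R\colon f(x)\le 0\}.\]
Since $m$ is even, $f(rx)=r^mf(x)$ with $r^m\ge 0$, so both $P_+$ and $P_-$ are closed under multiplication by rationals; this is condition (i) of Lemma \ref{LL1}. The hypothesis says
\[D\sbq (P_+\times P_+)\cup(P_-\times P_-).\]
The goal is to show that $f\ge 0$ everywhere or $f\le 0$ everywhere; either conclusion gives $f(x)f(y)\ge 0$ on $\R^2$.

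As in the proof of Theorem \ref{twu}, the set $B:=\{x\colon D_x\notin\mc N\}$ is Borel and not in $\mc M$. For each $x\in B$, either $f(x)>0$, $f(x)<0$ or $f(x)=0$.
\begin{itemize}
\item If $f(x)>0$, then every $y\in D_x$ satisfies $f(y)\ge 0$, so $D_x\sbq P_+$. Hence $P_+$ contains the Borel set $D_x\notin\mc N$.
\item If $f(x)<0$, the same argument gives $D_x\sbq P_-$.
\item If $f(x)=0$ for all $x\in B$, then $f$ vanishes on the Borel nonmeager set $B$. By Theorem \ref{TT1}(ii), $f\equiv 0$ and we are done.
\end{itemize}
So we may assume some $x\in B$ has $f(x)\neq 0$. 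Then $P_+$ or $P_-$ contains a Borel set outside $\mc N$. Lemma \ref{LL1} with $\I=\mc N$ then gives $\R\setminus P_+\in\mc N$ (respectively $\R\setminus P_-\in\mc N$). In other words, $\{x\colon f(x)\notin[0,\infty)\}\in\mc N$ (respectively $\{x\colon f(x)\notin(-\infty,0]\}\in\mc N$). Lemma \ref{LL2} with the closed set $H=[0,\infty)$ (respectively $H=(-\infty,0]$) yields $f\ge 0$ (respectively $f\le 0$) everywhere.

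In the symmetric case we use $\{x\colon D_x\notin\mc M\}\notin\mc N$, which is Borel by \cite[22.25]{Ke}. We then invoke Theorem \ref{TT1}(i), and apply Lemmas \ref{LL1} and \ref{LL2} with $\I=\mc M$.

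The main point to check is that the sections $D_x$ are Borel, which is immediate, and that the case split is exhaustive. One might worry that a point with $f(x)\ne 0$ could lie outside $B$. This does not matter: we only need a single $x\in B$ with $f(x)\neq 0$, and if no such point exists, then $f$ vanishes on all of $B$ and Theorem \ref{TT1} applies.
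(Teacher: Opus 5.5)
Your proof is correct and is essentially the paper's argument. Like the paper, you use the Borel set $B=\{x\colon D_x\notin\mc N\}\notin\mc M$ and apply Lemmas \ref{LL1} and \ref{LL2} to $P_f$ or $N_f$ via a section $D_x$; the only cosmetic differences are that you split cases by the sign of $f(x)$ on $B$ (the contrapositive of the paper's third case), and that in the all-zero case you cite Theorem \ref{TT1} directly instead of applying Lemma \ref{LL1} to the zero set and Lemma \ref{LL2} with $H=\{0\}$.
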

\begin{proof}
If $m=0$, the assertion is clear. So, let $m>0$. Define
$$P_f:=\{x\in\R\colon f(x)\ge 0\},\quad N_f:=\{x\in\R\colon f(x)\le 0\} .$$
Since $f(rx)=r^m f(x)$ for all $x\in\R$ and $r\in\mathbb Q$, the sets $P_f$ and $N_f$ satisfy condition (i) in Lemma \ref{LL1}. Also, the set $P_f\cap N_f=\{x\colon f(x)=0\}$ satisfies it. We will check condition (ii) of this lemma for the above sets in the respective situations. 

Assume that $D\notin\mc M\otimes\mc N$. The proof for $D\notin\mc N\otimes \mc M$ is analogous. By the assumption we have
$$D\sbq (P_f\times P_f)\cup(N_f\times N_f).$$
Since $D\notin\mc M\otimes\mc N$, we obtain
$$B:=\{x\in\R\colon D_x\notin\mc N\}\notin\mc M.$$
Like in the proof of Theorem \ref{twu}, we observe that the set $B$ is Borel. Consider three cases.

If $D_x\sbq P_f$ for some $x\in B$, we can check that condition (ii) of Lemma \ref{LL1} holds for $P_f$ with $A:=D_x$. Hence by this lemma $\R\setminus P_f\in\mc N$. Then observe that $f$ and $P_f$ satisfy the assumptions of Lemma \ref{LL2} with 
$H:=[0,\infty)$. Thus $f(x)\ge 0$ for each $x\in\R$.

If $D_x\sbq N_f$ for some $x\in B$, we can check that condition (ii) of Lemma \ref{LL1} holds for $N_f$ with $A:=D_x$. Hence by this lemma $\R\setminus N_f\in\mc N$. Then observe that $f$ and $N_f$ satisfy the assumptions of Lemma \ref{LL2} with 
$H:=(-\infty,0]$. Thus $f(x)\le 0$ for each $x\in\R$.

If $D_x\setminus P_f\neq\emptyset$ and $D_x\setminus N_f\neq\emptyset$ for each $x\in B$, for any $x\in B$ we can pick 
$y_i\in D_x$ ($i=1,2$) such that $f(y_1)< 0$ and $f(y_2)> 0$. Since $(x,y_i)\in D$, we have 
$f(x)f(y_i)\ge 0$ for $i=1,2$. Consequently, $f(x)=0$. Thus we have proved that $B\sbq P_f\cap N_f$.
Hence condition (ii) in Lemma \ref{LL1} holds for $Z_f:=P_f\cap N_f$ and $A:=B$. Now, by Lemma \ref{LL1},
we obtain $\R\setminus Z_f\in\mc M$. Finally, we apply Lemma \ref{LL2} to $f$ and $Z_f$ with $H:=\{0\}$.
So, $f(x)=0$ for each $x\in\R$.

Clearly, the assertion of  the theorem holds in each of the above cases.
\end{proof}

Like it was mentioned for Theorem \ref{twu}, also Theorem \ref{tww} can be generalized to the case when $f\colon\R^k\to\R$.

\section{Concluding remarks}
Although the proofs of Theorems \ref{twu} and \ref{tww} follow several nice ideas from \cite{BM} and \cite{BM1}, they produce new results. There is also a small additional effect. 
Our simple observation that it suffices to consider a 
large set $D\sbq\R^2$ which is Borel (instead of being measurable) can slightly simplify a reasining in the proofs of the respective theorems in \cite{BM}. Indeed, one can omit the use of integrals in the proofs
of \cite[Theorems 3.2 and 4.3]{BM}, and the use of expression $D=U\bigtriangleup T$ in the proofs of 
\cite[Theorems 3.6 and 4.8]{BM} where $U$ is open and $T$ is meager. To this aim it suffices to modify respectively our reasoning in the proofs of Theorems \ref{twu} and \ref{tww}. Here an important role is played by the Fubini and the Kuratowski-Ulam theorems and their converses, and by the fact that the sets 
$\{x\colon D_x\notin\mc M\}$ and $\{x\colon D_x\notin\mc N\}$ are Borel provided that $D\sbq\R^2$ is Borel.

Finally, let us summarize our results and those of \cite{BM}. We formulate it in two corollaries.
For $f\colon\R\to\mathbb C$, we denote $\wf\colon\R^2\to\mathbb C$ by $\wf(x,y):=f(x)f(y)$.

\begin{corollary} \label{C1}
Assume that $f\colon\R\to\mathbb C$ is a generalized polynomial. The following are equivalent:
\begin{itemize}
\item[(1)] $f$ vanishes on a Borel set of positive measure;
\item[(2)] $f$ vanishes on a Borel nonmeager set;
\item[(3)] $\wf$ vanishes on a plane Borel set of positive measure;
\item[(4)] $\wf$ vanishes on a nonmeager plane Borel set;
\item[(5)] $\wf$ vanishes on a Borel set $D\notin\mc M\otimes\mc N$;
\item[(6)] $\wf$ vanishes on a Borel set $D\notin\mc N\otimes\mc M$;
\item[(7)] $f$ vanishes everywhere on $\R$.
\end{itemize}
\end{corollary}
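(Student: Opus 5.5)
The plan is to prove the equivalences by routing everything through condition (7). Each of (1)--(6) will be shown to imply (7) by citing an earlier result, and (7) will be shown to imply each of (1)--(6) by a trivial choice of set.

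First, (7) implies all of (1)--(6). If $f\equiv 0$, then $f$ vanishes on $\R$, which is a Borel set of positive measure and also a Borel nonmeager set. Likewise $\wf\equiv 0$ vanishes on $\R^2$, a Borel set of positive plane measure that is nonmeager. It remains to check that $\R^2\notin\mc M\otimes\mc N$ and $\R^2\notin\mc N\otimes\mc M$. For this I will take any Borel $B\supseteq\R^2$; then $B=\R^2$, so $\Phi_{\mc N}(B)=\R\notin\mc M$ and $\Phi_{\mc M}(B)=\R\notin\mc N$.

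Next, the implications into (7):
\begin{itemize}
\item[(1)$\Rightarrow$(7)] A Borel set is measurable, so Theorem \ref{TT1}(i) applies directly.
\item[(2)$\Rightarrow$(7)] A Borel set has the Baire property, so Theorem \ref{TT1}(ii) applies.
\item[(3)$\Rightarrow$(7)] Since $\wf$ vanishes on $D$ exactly when $f(x)f(y)=0$ on $D$, this is Theorem \ref{alt}(I).
\item[(4)$\Rightarrow$(7)] For the same reason, this is Theorem \ref{alt}(II).
\item[(5)$\Rightarrow$(7)] and (6)$\Rightarrow$(7). Both are exactly Theorem \ref{twu}.
\end{itemize}

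There is no real obstacle here, since everything reduces to earlier results. The only points to check are that "Borel" meets the measurability and Baire-property hypotheses of those results, and that $\R^2$ indeed lies outside both Mendez ideals, which the definition via Borel covers gives at once.
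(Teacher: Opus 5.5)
Your proposal is correct and follows the paper's own argument: (1),(2) go to (7) by Theorem \ref{TT1}, (3),(4) by Theorem \ref{alt}, (5),(6) by Theorem \ref{twu}, and (7) trivially gives all the others. Your explicit check that $\R^2$ lies outside both Mendez ideals fills in the one detail the paper leaves implicit, and the extra implications the paper records via Remark \ref{Rem} are not needed for the equivalence.
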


Implications (1) $\to$ (7) and (2) $\to$ (7) are stated in Theorem \ref{TT1}. 

Implications (3) $\to$ (7) and (4) $\to$ (7) are stated in Theorem \ref{alt}; note that Theorem \ref{TT1} is used in the proof.

Implications (5) $\to$ (7) and (6) $\to$ (7) are stated in  Theorem \ref{twu}; note that Theorem \ref{TT1} is used in the proof. 

By Remark \ref{Rem} we have (3) $\to$ (1) and (4) $\to$ (2). Similarly, (6) $\to$ (1) and (5) $\to$ (2). 

Trivially, (7) implies each of the remaining conditions. 

\begin{remark} \label{remi}
{\em Clearly, if $\wf$ nanishes on sets $\{ 0\}\times\R$ or $\R\times\{ 0\}$ that are meager of measure zero, then $f$ vanishes everywhere. It was proved in \cite{BF} that, if $f\colon \R\to\R$ is a generalized polynomial and $\wf$ vanishes on the unit circle, then $f$ vanishes everywhere.}
\end{remark}

The following problem seems interesting.

\begin{problem}
{\em Let us call a set $E\sbq\R$ good if the vanishing of every generalized polynomial on $E$ implies its vanishing everywhere. Is it true that every good set contains a minimal good set?}
\end{problem}

\begin{corollary} \label{C2}
Assume that $f\colon\R\to\R$ is a generalized monomial of even degree. The following are equivalent:
\begin{itemize}
\item[(1)] $\wf\ge 0$ on a plane Borel set of positive measure;
\item[(2)] $\wf\ge 0$ on a nonmeager plane Borel set;
\item[(3)] $\wf\ge 0$ on a Borel set $D\notin\mc M\otimes\mc N$;
\item[(4)] $\wf\ge 0$ on a Borel set $D\notin\mc N\otimes\mc M$;
\item[(5)] $\wf\ge 0$ everywhere on $\R^2$.
\end{itemize}
\end{corollary}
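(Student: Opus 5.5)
We prove Corollary \ref{C2} by combining the earlier theorems with a few easy implications. The implications (1) $\to$ (5) and (2) $\to$ (5) are exactly Theorem \ref{cond}, cases (I) and (II). This uses the observation made after that theorem: a Borel set of positive plane measure is Lebesgue measurable of positive measure, and a nonmeager plane Borel set is Baire measurable and nonmeager. The implications (3) $\to$ (5) and (4) $\to$ (5) are exactly Theorem \ref{tww}, applied to the Borel set $D$ with $D\notin\mc M\otimes\mc N$, respectively $D\notin\mc N\otimes\mc M$.

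It remains to show that (5) implies each of (1)--(4). If $\wf\ge 0$ on all of $\R^2$, we take $D:=\R^2$. This set is Borel, has positive (infinite) plane measure, and is nonmeager in $\R^2$ by the Baire category theorem. So (1) and (2) hold.

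For (3) we must check that $\R^2\notin\mc M\otimes\mc N$. Suppose a Borel set $B\supseteq\R^2$ satisfied $\Phi_{\mc N}(B)\in\mc M$. Then $B=\R^2$, so $B_x=\R\notin\mc N$ for every $x$, which gives $\Phi_{\mc N}(B)=\R\in\mc M$. This contradicts the Baire category theorem on $\R$. Symmetrically, $\R^2\notin\mc N\otimes\mc M$, since otherwise $\R\in\mc N$, which is false. Hence (3) and (4) hold as well.

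Together these implications give the equivalence of (1)--(5). There is no real obstacle: all the substantive work is contained in Theorems \ref{cond} and \ref{tww}. The only point needing care is the degenerate case $m=0$, where $f$ is a constant $c$ and $\wf\equiv c^2\ge 0$. In that case all five conditions hold trivially, which is consistent with the $m=0$ case treated in the proof of Theorem \ref{tww}.
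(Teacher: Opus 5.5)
Your proposal is correct and follows the paper's own argument: Theorem \ref{cond} gives (1)$\to$(5) and (2)$\to$(5), Theorem \ref{tww} gives (3)$\to$(5) and (4)$\to$(5), and (5) implies the remaining conditions by taking $D=\R^2$. You spell out what the paper calls trivial, namely that $\R^2\notin\mc M\otimes\mc N$ and $\R^2\notin\mc N\otimes\mc M$, and the degree-zero case. You also correctly attribute (3),(4)$\to$(5) to Theorem \ref{tww}, whereas the paper's text has a typo there, listing (2) and (3).
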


Implications (1) $\to$ (5) and (2) $\to$ (5) are stated in Theorem \ref{cond}. 

Implications (2) $\to$ (5) and (3) $\to$ (5) are stated in Theorem \ref{tww}.

Trivially, condition (5) implies each of the remaining conditions.

\end{document}